\def\N{\hbox{\font\dubl=msbm10 scaled 1000 {\dubl N}}}
\DeclarePairedDelimiter\parentheses{\lparen}{\rparen}
\DeclarePairedDelimiter\abs{\lvert}{\rvert}
\DeclarePairedDelimiter\brackets{\lbrack}{\rbrack}
\DeclarePairedDelimiter\braces{\lbrace}{\rbrace}
\def\d{\,{\rm{d}}}
\newtheorem{Theorem}{Theorem}
\newtheorem{Cor}{Corollary}
\newtheorem{Lem}{Lemma}
\title[The Lindel\"of Hypothesis for Zeta Zero Ordinates]{The Lindel\"of Hypothesis for Zeta Zero Ordinates}
\thanks{\bf Dedicated to the memory of Aleksandar Ivi\'c}
\author[Ram\={u}nas Garunk\v{s}tis, Athanasios Sourmelidis, J\"orn Steuding]{Ram\={u}nas Garunk\v{s}tis, Athanasios Sourmelidis, J\"orn Steuding}
\begin{document}
	
	\begin{abstract}
		We provide conditional and unconditional asymptotic formulae for the exponential sums $\sum_\gamma\,\gamma^{-i\tau}$, where the summation is over the ordinates of the nontrivial zeros $\rho=\beta+i\gamma$ of the Riemann zeta-function.
		 In particular, the obtained results are related to the Lindel\"of Hypothesis for these ordinates (in the sense of Gonek et al. \cite{gonek}).    
	\end{abstract}
	
	\maketitle
	
	{\small \noindent {\sc Keywords:} Riemann zeta-function, $S(t)$, nontrivial zeros, Lindel\"of Hypothesis, Riemann Hypothesis\\
		{\sc Mathematical Subject Classification:} 11M06, 11M26, 11M41}
	\bigskip
	
	\section{Motivation and Statement of the Main Results} 
	
	The distribution of the zeros of the Riemann zeta-function $\zeta$ fascinates and occupies generations of mathematicians. When ambiguities exist, new perspectives in particular are valuable. This article discusses such a new perspective, recently introduced by Gonek, Graham \& Lee \cite{gonek}.
	
	Let $\rho=\beta+i\gamma$ denote the nontrivial (non-real) zeros of $\zeta(s)$. Following Gonek et al. \cite{gonek}, the Lindel\"of Hypothesis for the sequence of ordinates of the nontrivial zeros of $\zeta$ is the statement that, for real $x$ and $\tau$, the asymptotic formula
	\begin{equation}\label{LHforZEROS}
		\sum_{0<\gamma<x}\gamma^{-i\tau}=\int_2^x\frac{u^{-i\tau}}{2\pi}\log\frac{u}{2\pi}\d u+{\mathcal E}
	\end{equation}
	holds with an error term ${\mathcal E}$ satisfying
	\begin{equation}\label{error}
		{\mathcal E}\ll \vert \tau\vert^\epsilon x^{1/2}
	\end{equation}
	for $2\leq x\leq \vert \tau\vert^B$ and any $B>0$\footnote{All implicit constants may depend only on the arbitrary but fixed $\epsilon>0$.}; we abbreviate this by $\mathrm{LH}(\gamma)$. Note that the lower limit of the integral $u\geq 2$ is chosen to be strictly smaller than the minimum of the positive ordinates $\gamma$ (which is slightly larger than $14$). The asymptotic formula \eqref{LHforZEROS} with the error term \eqref{error} is trivially valid for $2\le x \le |\tau|^\epsilon$.

	Our first approach to prove \eqref{LHforZEROS} with the error term \eqref{error} relies on the argument of $\zeta$. This function is defined in a standard way (see \cite[Section 9.3]{tit2})  by 
	\begin{equation*}
		S(T)={\textstyle{\frac{1}{\pi}}}\arg\zeta({\textstyle{\frac{1}{2}}}+iT)
	\end{equation*}
	and appears in the Riemann-von Mangoldt formula for the number $N(T)$ of nontrivial zeros $\rho=\beta+i\gamma$ satisfying $0<\gamma<T$ (counting multiplicities), i.e.
	\begin{equation}\label{RvM}
		N(T)=\frac{T}{2\pi}\log\frac{T}{2\pi e}+{\textstyle{\frac{7}{8}}}+S(T)+f(T),
	\end{equation}
	where $f(T)\ll T^{-1}$ is an odd power series in $T^{-1}$. The present best estimate for $S$ had already been given by von Mangoldt \cite{mangoldt}, who showed that
	\begin{equation}\label{litt}
		S(T)\ll \log T .
	\end{equation}
	A simplified proof of this estimate is due to Landau \cite{landau}; his approach can still be found in modern literature. Littlewood \cite{littlewood} replaced the right hand side in (\ref{litt}) by $\log T/\log\log T$ under assumption of the Riemann Hypothesis, and there is still no better conditional estimate of $S$ known. 
	
	Estimate (\ref{litt}) not only allows an asymptotic Riemann-von Mangoldt formula with a good error term for (\ref{RvM}) but provides the following  result towards $\mathrm{LH}(\gamma)$.
	
	\begin{Theorem}\label{3}
		For any real numbers $\tau$ and $x>0$ we have that
		$$
		\sum_{0<\gamma<x}\gamma^{-i\tau}=\frac{x^{1-i\tau}}{2\pi(1-i\tau)}\left(\log \frac{x}{2\pi}-\frac{1}{1-i\tau}\right)+O(|\tau|(\log x)^2). 
		$$
	\end{Theorem}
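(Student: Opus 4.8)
The plan is to express the sum as a Riemann--Stieltjes integral against the zero-counting function $N$ and to substitute the Riemann--von Mangoldt formula \eqref{RvM}, which separates the problem into a smooth main part and an $S$-part that is controlled by von Mangoldt's bound \eqref{litt}. Since all positive ordinates exceed $14$, one has $\sum_{0<\gamma<x}\gamma^{-i\tau}=\int_2^x t^{-i\tau}\,\d N(t)$ for $x$ large (for bounded $x$ the asserted formula is immediate), and writing $M(t)=\frac{t}{2\pi}\log\frac{t}{2\pi e}+\frac78+f(t)$ so that $N(t)=M(t)+S(t)$ by \eqref{RvM}, this becomes
$$
\sum_{0<\gamma<x}\gamma^{-i\tau}=\int_2^x t^{-i\tau}\,\d M(t)+\int_2^x t^{-i\tau}\,\d S(t).
$$

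For the first integral I would use $M'(t)=\frac{1}{2\pi}\log\frac{t}{2\pi}+f'(t)$; since $f$ is an odd power series in $t^{-1}$ with $f(t)\ll t^{-1}$ we have $f'(t)\ll t^{-2}$, so the $f'$-contribution is $O(1)$, and a routine integration by parts shows that $\frac{1}{2\pi}\int t^{-i\tau}\log\frac{t}{2\pi}\,\d t$ has antiderivative $\frac{t^{1-i\tau}}{2\pi(1-i\tau)}\bigl(\log\frac{t}{2\pi}-\frac{1}{1-i\tau}\bigr)$. Evaluating between $2$ and $x$ produces exactly the main term claimed in the theorem, the value at the lower endpoint being $O(1)$ because $|1-i\tau|\ge 1$; all of this is absorbed into the error term.

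The substance of the argument is the second integral. Here I would integrate by parts in the Stieltjes sense (legitimate since $t^{-i\tau}$ is smooth and $S$ is bounded and piecewise continuous on $[2,x]$), obtaining
$$
\int_2^x t^{-i\tau}\,\d S(t)=x^{-i\tau}S(x)-2^{-i\tau}S(2)+i\tau\int_2^x S(t)\,t^{-i\tau-1}\,\d t.
$$
By \eqref{litt} the boundary terms contribute $\ll\log x$, while
$$
\Bigl|\,i\tau\int_2^x S(t)\,t^{-i\tau-1}\,\d t\,\Bigr|\ll|\tau|\int_2^x\frac{|S(t)|}{t}\,\d t\ll|\tau|\int_2^x\frac{\log t}{t}\,\d t\ll|\tau|(\log x)^2 .
$$
Adding the contributions yields an error of size $O\bigl((1+|\tau|)(\log x)^2\bigr)$, which is the asserted bound.

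I do not expect any serious obstacle here: the argument is a standard partial summation, and the only input of any depth is von Mangoldt's estimate \eqref{litt}. The point worth emphasising is that the shape $|\tau|(\log x)^2$ of the error comes entirely from feeding \eqref{litt} into the integral $\int_2^x\frac{\log t}{t}\,\d t$; correspondingly, any improvement of \eqref{litt} (for example Littlewood's conditional $S(T)\ll\log T/\log\log T$) would immediately sharpen Theorem~\ref{3}, and it is precisely this observation that motivates the finer approaches to $\mathrm{LH}(\gamma)$ developed in the rest of the paper.
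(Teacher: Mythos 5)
Your proof is correct and follows essentially the same route as the paper: Stieltjes integration of $t^{-i\tau}$ against $N(t)$, splitting $N$ via the Riemann--von Mangoldt formula into the smooth part plus $S$, showing the $f$-contribution is $O(1)$ from $f'(t)\ll t^{-2}$, integrating by parts in the $S$-integral, and feeding in von Mangoldt's bound $S(T)\ll\log T$. The only (cosmetic) difference is that you compute the main-term antiderivative directly, while the paper first manipulates $\mathcal I=\int_2^x\frac{u^{-i\tau}}{2\pi}\log\frac{u}{2\pi}\d u$ by parts and solves for $\mathcal I$; and you honestly record the error as $O\bigl((1+|\tau|)(\log x)^2\bigr)$, which is the more accurate form of what both arguments actually give (the paper's $O(|\tau|(\log x)^2)$ tacitly assumes $|\tau|\gtrsim 1$).
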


It follows immediately from Theorem \ref{3} that the $\mathrm{LH}(\gamma)$ is true if $|\tau|^2\leq x\leq \tau^B$ for arbitrary fixed $B\geq2$.
	The main term is equal to the integral in (\ref{LHforZEROS}) up to a bounded quantity (as follows from the Riemann-von Mangoldt formula with error term, see (\ref{belo}) below). 
	Note that it is essentially this range of small $\vert\tau\vert\ll\sqrt{x}$ for which the main term dominates the error term ${\mathcal E}$ in (\ref{LHforZEROS}). 
	Recall also that the classical Lindel\"of Hypothesis 
	\begin{align}\label{lh}
	\zeta\left(\frac{1}{2}+it\right)\ll|t|^\epsilon,\quad|t|\geq1,
	\end{align}
	is equivalent to the estimate 
\[
\sum_{n\leq x}n^{i\tau}\ll \vert \tau\vert^\epsilon x^{1/2},\quad1\leq x\leq \vert \tau\vert^{1/2},
\]
 as it is shown in Karatsuba and Voronin \cite[p.162]{kavo}. 
This indicates as well that small values for $x$ (or large values for $\vert \tau\vert$) are crucial. 

To prove the conjecture to the full range seems to be difficult.
As the proof of Theorem \ref{3} shows, it would be required to bound non-trivially the term $i\tau\int_{0}^{x}S(t)t^{-1-i\tau}\mathrm{d}\tau$.
Regardless, if we were to succeed, then the error term dominates the main term and the sum exhibits square root cancellation which would suffice and is often the optimal result one may hope to obtain.
This line of research has been pursued already by Fujii \cite{fujii} where he estimated $\int_{0}^{x}f'(t)S(t)e^{if(t)}\mathrm{d}t$ for a certain class of functions.
His approach is based on Littlewood's lemma which is an integrated version of the residue theorem.
In our case $f(t)=-\tau\log t$ is not included in that class but only for a small technical detail.
Fujii also fixes $\tau$, while we wish to have it run uniformly in certain ranges of powers of $x$.
It  is probable that his approach can be applied in this case as well.

An alternative approach is to study the sum
	\[
\mathcal{I}(x,y,\tau):=\sum_{x\leq\gamma< y}\parentheses*{i\parentheses*{\frac{1}{2}-\rho}}^{-i\tau},\quad 0<x<y,\,\tau\in\mathbb{R},
\]  
unconditionally first and then assume RH. 
To that end we will prove  the next theorem by appealing directly to the residue theorem.
\begin{Theorem}\label{unconditional1}
	Let $c\in(0,1)$.
	Uniformly on $2\leq x\ll y\ll x$ and $0\leq\tau\ll x\log x$
	\[
	\mathcal{I}(x,y,\tau)=\int_x^{y}\frac{t^{-i\tau}}{2\pi}\log\frac{t}{2\pi}\mathrm{d}t+O\parentheses*{(\log x)^2},\quad0\leq\tau\leq cx\log 2,
	\]
	and
	\[
			\mathcal{I}(x,y,\tau)=\frac{e^{i(\tau+\pi /4)}{\tau}^{1/2-i\tau}}{\parentheses{2\pi}^{1/2}}\sum_{e^{\tau/y}\leq n\leq e^{\tau/x}}\frac{\Lambda(n)(\log n)^{i\tau}}{{n}^{1/2}\log n}+\mathcal{F},\quad cx\log 2\leq\tau\ll x\log x.
		\]
		Here $\Lambda(n)$ is the von Mangoldt function and
		\[
		\mathcal{F}\ll e^{\tau/(2x)}\parentheses*{(\log x)^2+\parentheses*{\frac{\tau}{x}}^2\log x}+\tau^{1/2}e^{\tau/(2x)-\tau/y}\frac{\tau}{x}.
	\]
\end{Theorem}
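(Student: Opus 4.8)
The plan is to write $\mathcal I(x,y,\tau)$ as a contour integral of $\tfrac{\zeta'}{\zeta}$ and to extract both asymptotics from the behaviour of $\tfrac{\zeta'}{\zeta}$ on the left edge of the critical strip via the functional equation. Put $\sigma_0=1+1/\log x$ and pick heights $T_1$ near $x$ and $T_2$ near $y$ that avoid ordinates of $\zeta$ and at which $\tfrac{\zeta'}{\zeta}(\sigma+iT_j)\ll(\log x)^2$ uniformly for $-1\le\sigma\le2$ (standard, from the Hadamard-product bound). Since $\Re\bigl(i(\tfrac12-s)\bigr)=\Im s\ge x>0$ on the rectangle $R$ with corners $\sigma_0+iT_1,\ \sigma_0+iT_2,\ (1-\sigma_0)+iT_2,\ (1-\sigma_0)+iT_1$, the function $g(s):=\bigl(i(\tfrac12-s)\bigr)^{-i\tau}$ is holomorphic on and inside $R$, so the residue theorem gives $\sum_{T_1<\gamma<T_2}g(\rho)=\tfrac1{2\pi i}\oint_{\partial R}\tfrac{\zeta'}{\zeta}(s)g(s)\d s$ counted with multiplicity; passing to $\sum_{x\le\gamma<y}$ changes this by $O(\log x)$ zeros of weight $|g(\rho)|\ll e^{\tau/(2x)}$ each, an admissible error.

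I would then dispose of three edges and prepare the fourth. On the right edge $\tfrac{\zeta'}{\zeta}(\sigma_0+it)=-\sum_n\Lambda(n)n^{-\sigma_0-it}$, $|g|\le1$, and $\int_{T_1}^{T_2}n^{-it}g\,\d t$ has phase derivative $\asymp\log n+\tau/t>0$, so integration by parts bounds it by $\ll\tfrac1{\log n}+\tfrac{\tau/x}{(\log n)^2}$ and the edge by $\ll\log\log x+\tfrac\tau x\log x$. The two horizontal edges are $\ll e^{\tau/(2x)}(\log x)^2$, using $|g|\le e^{\tau\arctan(a/x)}\ll e^{\tau/(2x)}$ with $a:=\sigma_0-\tfrac12$ (since $e^{\tau a/x}=e^{\tau/(2x)}e^{O(1)}$). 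The left edge is the substantial one: by the functional equation $\tfrac{\zeta'}{\zeta}\bigl((1-\sigma_0)+it\bigr)=\tfrac{\chi'}{\chi}\bigl((1-\sigma_0)+it\bigr)-\tfrac{\zeta'}{\zeta}(\sigma_0-it)$, and Stirling gives $\tfrac{\chi'}{\chi}\bigl((1-\sigma_0)+it\bigr)=-\log\tfrac t{2\pi}+O(1/t)$, so up to an $O\bigl(\int_x^y t^{-1}|g|\,\d t\bigr)$ remainder this edge splits into a $\chi$-part $\tfrac1{2\pi}\int_{T_1}^{T_2}\log\tfrac t{2\pi}\,g\bigl((1-\sigma_0)+it\bigr)\,\d t$ and a Dirichlet part $-\tfrac1{2\pi}\sum_n\tfrac{\Lambda(n)}{n^{\sigma_0}}\int_{T_1}^{T_2}n^{it}\,g\bigl((1-\sigma_0)+it\bigr)\,\d t$, in which $g\bigl((1-\sigma_0)+it\bigr)=(t+ia)^{-i\tau}$.

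In the first regime ($\tau\le cx\log2$) I expand $(t+ia)^{-i\tau}=t^{-i\tau}e^{\tau a/t}\bigl(1+O(\tau/t^2)\bigr)$ and $e^{\tau a/t}=\sum_{k\ge0}(\tau a/t)^k/k!$: the $k=0$ term is the asserted main integral $\int_x^y\tfrac{t^{-i\tau}}{2\pi}\log\tfrac t{2\pi}\,\d t$, while each $k\ge1$ term, after one integration by parts in $t^{-i\tau}$ (or trivially when $\tau\le1$), is $\ll\tfrac{(\tau a)^k}{k!}\tfrac{\log x}{x^{k-1}\max(\tau,1)}$, whose sum is $O(\log x)$ because $\tau/x=O(1)$; moreover $\tau/x\le c\log2<\log n$ for $n\ge2$ forces the Dirichlet-part integrals to have no saddle and phase derivative $\gg_c\log n$, so that part is $\ll\log\zeta(\sigma_0)\ll\log\log x$ — this gives the first formula with error $O((\log x)^2)$. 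In the second regime ($cx\log2\le\tau\ll x\log x$) the $\chi$-part is negligible: its phase $\approx-\tau\log t$ has derivative $\asymp\tau/x\neq0$, so integration by parts makes it $\ll\tfrac{x\log x}\tau e^{\tau/(2x)}\log x\ll e^{\tau/(2x)}(\log x)^2$. The main term comes from the Dirichlet part: for each $n$ the integral $\int_{T_1}^{T_2}n^{it}(t+ia)^{-i\tau}\,\d t$ has a unique saddle (the real phase $\varphi$ has $\varphi''>0$ in the relevant range), located in $[T_1,T_2]$ precisely when $e^{\tau/T_2}\le n\le e^{\tau/T_1}$, i.e.\ essentially $e^{\tau/y}\le n\le e^{\tau/x}$. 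A saddle-point evaluation — the complex saddle of $\Phi(t)=t\log n-\tau\log(t+ia)$ sits at $t_*=\tau/\log n-ia$, with $\Phi(t_*)=\tau-\tau\log\tau+\tau\log\log n-ia\log n$ and $\Phi''(t_*)=(\log n)^2/\tau$ — yields $\int\sim\tfrac{\sqrt{2\pi\tau}}{\log n}e^{i\pi/4}e^{i\tau}\tau^{-i\tau}(\log n)^{i\tau}n^{a}$, and since $n^{a}/n^{\sigma_0}=n^{-1/2}$ exactly, summing over the saddle range reproduces (up to sign) $\tfrac{e^{i(\tau+\pi/4)}\tau^{1/2-i\tau}}{(2\pi)^{1/2}}\sum_{e^{\tau/y}\le n\le e^{\tau/x}}\tfrac{\Lambda(n)(\log n)^{i\tau}}{n^{1/2}\log n}$. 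The leftovers — $n$ outside the saddle range (no saddle, integrals $\ll e^{\tau/(2x)}/|\log n-\tau/t|$, summing over $n\gg e^{\tau/x}$ to $\ll e^{\tau/(2x)}(\log x)^2$), the transitional ranges $n\approx e^{\tau/x}$ and $n\approx e^{\tau/y}$ where the saddle meets an endpoint (producing the terms $e^{\tau/(2x)}(\tau/x)^2\log x$ and $\tau^{1/2}e^{\tau/(2x)-\tau/y}\tau/x$), and the saddle-point error for bulk $n$ — are all collected into $\mathcal F$.

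The main obstacle is the saddle-point analysis of the Dirichlet-part integrals in the second regime, for two reasons: the amplitude $|(t+ia)^{-i\tau}|=e^{\tau\arctan(a/t)}$ is unbounded (as large as $e^{\tau/(2x)}$) and varies over $[T_1,T_2]$, so every error term inherits this factor and one must check it stays within $\mathcal F$; and the transitional ranges, where the saddle slides onto $T_1$ or $T_2$, demand a uniform estimate interpolating between stationary and non-stationary behaviour and are exactly the source of the more delicate terms in $\mathcal F$. Everything else — the residue theorem, the functional equation together with Stirling, the right and horizontal edges, and the power-series manipulation in the first regime — is routine bookkeeping.
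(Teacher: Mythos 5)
Your proposal is correct and follows essentially the same route as the paper: a contour integral of $\tfrac{\zeta'}{\zeta}(s)\,\bigl(i(\tfrac12-s)\bigr)^{-i\tau}$ over a rectangle whose left edge sits at $\sigma=-1/\log x$, followed by the functional equation to split the left edge into a $\Delta'/\Delta$ part and a mirrored Dirichlet-series part, with the Karatsuba--Voronin stationary-phase lemma doing the work in the second regime and the fact that $\tau/t - \log n$ stays bounded away from zero doing it in the first. Two small technical choices differ from the paper but are equivalent: you keep the $\Delta'/\Delta$ integral on $\sigma=-1/\log x$ and expand the amplitude $e^{\tau a/t}$ as a power series, whereas the paper instead shifts that sub-integral to $\sigma=1/2$ where the amplitude is identically $1$; and you phrase the saddle evaluation as a complex saddle at $t_*=\tau/\log n - ia$ with $n^{a}/n^{\sigma_0}=n^{-1/2}$, whereas the paper keeps a real stationary point $t_0=\tau/\log n$ with an explicit amplitude $\phi(t)=e^{\delta\tau/t}$ in the lemma, the $n^{-1/2}$ arising identically from $\phi(t_0)n^{-(1+\eta)}=n^{-1/2}$. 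Both variants lead to the same bookkeeping, including the delicate transitional ranges near $e^{\tau/x}$ and $e^{\tau/y}$ that you correctly identify as the source of the extra terms in $\mathcal F$.
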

The above result agrees with \cite[Theorem]{fujii} when $\tau$ is fixed.
Applying the trivial bound on the exponential sum involving $\Lambda(n)$ readily yields the following conditional result.

\begin{Cor}\label{cond}
On the Riemann hypothesis we have that
\[
\sum_{{x\leq\gamma<2x}}\gamma^{-i\tau}\ll \frac{x\log x}{|\tau|}+(\log x)^2,\quad1\leq|\tau|\leq cx\log 2,
\]
and
\[
\sum_{{x\leq\gamma<2x}}\gamma^{-i\tau}=x^{1/2+o(1)},\quad cx\log2\leq|\tau|=o(x\log x).
\]
\end{Cor}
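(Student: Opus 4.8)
The plan is to deduce the corollary directly from Theorem \ref{unconditional1} by setting $y=2x$ and estimating the right-hand sides under the Riemann Hypothesis. First I would record the elementary reduction that allows one to pass from $\mathcal{I}(x,2x,\tau)$ back to $\sum_{x\le\gamma<2x}\gamma^{-i\tau}$: under RH each zero is $\rho=\tfrac12+i\gamma$, so $i(\tfrac12-\rho)=\gamma$, and hence $\mathcal{I}(x,2x,\tau)=\sum_{x\le\gamma<2x}\gamma^{-i\tau}$ exactly — no approximation is even needed here. (Off RH one would Taylor-expand $(\gamma+i(\tfrac12-\beta))^{-i\tau}$ and absorb the discrepancy using $\sum_{x\le\gamma<2x}|\tfrac12-\beta|\ll(\log x)^2$ from the zero-density/Riemann--von Mangoldt bounds, but for the stated conditional result this is not required.)

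For the first range $1\le|\tau|\le cx\log 2$, I would simply take the first formula of Theorem \ref{unconditional1} with $y=2x$ and bound the main term trivially: integrating by parts once,
\[
\int_x^{2x}\frac{t^{-i\tau}}{2\pi}\log\frac{t}{2\pi}\,\d t\ll\frac{1}{|\tau|}\,x\log x,
\]
since the antiderivative of $t^{-i\tau}$ contributes a factor $|1-i\tau|^{-1}\ll|\tau|^{-1}$ and the bracketed factor $\log(t/2\pi)$ is $\ll\log x$ on $[x,2x]$; the derivative term $\int t^{-i\tau}\,\d t/t$ is $O(\log x)$ lower order. Combining with the $O((\log x)^2)$ error from the theorem gives the claimed bound $x\log x/|\tau|+(\log x)^2$. (By symmetry $\gamma^{i|\tau|}=\overline{\gamma^{-i|\tau|}}$ the case of negative $\tau$ is identical, which is why the statement is phrased with $|\tau|$.)

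For the second range $cx\log 2\le|\tau|=o(x\log x)$, I would use the second formula of Theorem \ref{unconditional1}. The error $\mathcal{F}$ is controlled by observing that $\tau/x\ll\log x$ and $e^{\tau/(2x)}\le e^{\tau/(2x)}$ with $\tau/x=o(\log x)$... more carefully: in this range $\tau/x$ is bounded above by $o(\log x)$, so $e^{\tau/(2x)}=x^{o(1)}$, and similarly $\tau^{1/2}e^{\tau/(2x)-\tau/y}(\tau/x)=x^{1/2+o(1)}$ since $\tau^{1/2}\le(x\log x)^{1/2}=x^{1/2+o(1)}$ and the exponential factor is $x^{o(1)}$; thus $\mathcal{F}=x^{1/2+o(1)}$. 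For the main term, apply the trivial bound to the exponential sum over $\Lambda(n)$: the number of prime powers $n$ with $e^{\tau/y}\le n\le e^{\tau/x}$ is $\ll e^{\tau/x}=x^{o(1)}$ (crucially using $\tau/x=o(\log x)$ here), and each summand is $\ll\Lambda(n)/(n^{1/2}\log n)\ll 1$; meanwhile the prefactor has modulus $|\tau|^{1/2}(2\pi)^{-1/2}=x^{1/2+o(1)}$. Therefore the whole expression is $x^{1/2+o(1)}$, which is the asserted estimate. The main obstacle — or rather the main point where one must be careful — is verifying that the constant $c$ in Theorem \ref{unconditional1} and the constraint $\tau=o(x\log x)$ genuinely force $e^{\tau/x}=x^{o(1)}$, i.e. that the von Mangoldt sum has only $x^{o(1)}$ terms; this is exactly what keeps the trivial bound from overwhelming the $x^{1/2}$ target and is the place where the hypothesis $|\tau|=o(x\log x)$ (rather than merely $|\tau|\ll x\log x$) is used.
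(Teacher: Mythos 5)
Your proposal is correct and follows the route the paper intends: under RH one has $\mathcal{I}(x,2x,\tau)=\sum_{x\le\gamma<2x}\gamma^{-i\tau}$ exactly, the first bound follows by a single integration by parts in the main term of Theorem~\ref{unconditional1} (giving $\ll x\log x/|\tau|$) plus the $O((\log x)^2)$ error, and the second follows by bounding the von Mangoldt sum trivially (note $\Lambda(n)/\log n\le1$ and there are at most $e^{\tau/x}=x^{o(1)}$ terms) together with $\tau^{1/2}\le(x\log x)^{1/2}$ and the observation that $\tau/x=o(\log x)$ makes both $e^{\tau/(2x)}$ and the $\mathcal{F}$-term $x^{o(1)}$ and $x^{1/2+o(1)}$ respectively. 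You have also correctly pinpointed that the hypothesis $|\tau|=o(x\log x)$, rather than merely $|\tau|\ll x\log x$, is what keeps $e^{\tau/x}=x^{o(1)}$ and hence the trivial bound within the $x^{1/2+o(1)}$ target.
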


We next draw a connection on the conjectural behavior of the sum in \eqref{LHforZEROS} for the remaining range $2\leq x\leq|\tau|^2$ with the order of growth of the Dirichlet series
\[
G(s):=\sum_{\gamma>0}\frac{1}{\gamma^s},\quad\sigma>1,
\]
which was introduced by Delsarte \cite{delsarte}, and recent work of Ivi\'c \cite{ivic} and Bondarenko et al. \cite{bonda}.  
It follows from the Riemann-von Mangoldt formula (\ref{RvM}) \& (\ref{litt}) as well as $f(T)\ll T^{-1}$ that the $n$th ordinate of a nontrivial zero (in ascending order and in the upper half-plane) is asymptotically equal to $\gamma_n \sim 2\pi n/ \log n$. Hence, $G(s)$ converges in the right half-plane $\sigma>1$ absolutely and uniformly in any compact subset and, therefore, it is an analytic function for $\sigma>1$. 
Summation by parts shows that it can be continued meromorphically to the half-plane $\sigma>-1$ with a double pole at $s=1$.
By a straightforward approach (relying on integration by parts in combination with (\ref{RvM}) \& (\ref{litt})), Ivi\'c \cite{ivic} achieved the estimate 
\[
G(\sigma+it)\ll |t|^{1-\sigma}\log|t|+(\log|t|)^2,\quad0\leq\sigma\leq1,\quad|t|\geq t_0>0.
\] 

We show that better estimates  on the vertical line $1/2+i\mathbb{R}$ for $G(s)$ entail information on the $\mathrm{LH}(\gamma)$ and vice versa.
\begin{Theorem}\label{order}
	Let $A\in[0,1/2]$ and $\epsilon>0$ be fixed.
	If $\mathcal{E}$ is the error term in \eqref{LHforZEROS} then the following are equivalent:
	\begin{enumerate}[i)]
		\item $	\mathcal{E}\ll|\tau|^{A+\epsilon}{x}^{1/2}$ for any $2\leq x\leq|\tau|^2$,
		\item $G(1/2+it)\ll |t|^{A+\epsilon}$ for any $|t|\geq1$.
	\end{enumerate}
\end{Theorem}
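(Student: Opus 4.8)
The plan is to pass between the exponential sum and the Dirichlet series $G(s)$ by Mellin/partial-summation identities, treating the summatory function
\[
M(x):=\sum_{0<\gamma<x}\gamma^{-i\tau}
\]
as the bridge. Fix $\tau$ and set $s=\sigma+it$. Since $\gamma^{-i\tau}=\gamma^{-s}\cdot\gamma^{s-i\tau}$, I would first observe that for $\sigma>1$ one has, by absolute convergence,
\[
G(s)=s\int_{0^+}^\infty \Bigl(\sum_{0<\gamma<x}1\Bigr)x^{-s-1}\d x,
\]
and more to the point, writing $N_0(x)=\sum_{0<\gamma<x}1$ and using the Riemann–von Mangoldt formula \eqref{RvM} together with \eqref{litt}, the ``main term'' part of $N_0(x)$ contributes an explicit meromorphic function (with the double pole at $s=1$), while the $S(x)$-part contributes a function analytic in a half-plane $\sigma>\sigma_0$ for some $\sigma_0<1$. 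The key point is that the $\mathrm{LH}(\gamma)$ error term $\mathcal E=\mathcal E(x,\tau)$ is exactly what measures the oscillation of $M(x)$ about its smooth approximation $\int_2^x \frac{u^{-i\tau}}{2\pi}\log\frac{u}{2\pi}\d u$; so I would set
\[
\mathcal E(x,\tau)=M(x)-\int_2^x\frac{u^{-i\tau}}{2\pi}\log\frac{u}{2\pi}\d u.
\]

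For the implication (i) $\Rightarrow$ (ii): Take $t\ge 1$ and choose $\tau=t$ (this is the natural diagonal, and it is why the hypothesis is needed for the full range $2\le x\le |\tau|^2$ rather than just $|\tau|^2\le x$). Write
\[
G\bigl(\tfrac12+it\bigr)=G\bigl(\tfrac12+i\tau\bigr)=\sum_{\gamma>0}\gamma^{-1/2-i\tau}
=\frac12\int_{0^+}^\infty x^{-1/2}\,\d M(x)+\cdots,
\]
and split the integral at $x=|\tau|^2$. On the range $x\ge|\tau|^2$ I would insert Theorem \ref{3} (or rather its consequence that $\mathrm{LH}(\gamma)$ holds there unconditionally) to bound the tail; this contributes $O(|\tau|^\epsilon)$ after integration by parts against $x^{-1/2}$, since there the smooth main term of $M$ is of size $\asymp x^{1/2}\log x/|\tau|$ and its contribution to $G$ is $O(1)$ while the error $\mathcal E\ll|\tau|^\epsilon x^{1/2}$ integrated against $x^{-3/2}\d x$ converges. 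On the range $2\le x\le|\tau|^2$ I substitute $M(x)=\int_2^x\frac{u^{-i\tau}}{2\pi}\log\frac{u}{2\pi}\d u+\mathcal E(x,\tau)$. The smooth part yields, after integrating by parts, an explicit integral $\frac{1}{4\pi}\int_2^{|\tau|^2}x^{-1-i\tau}\log\frac{x}{2\pi}\d x$ plus boundary terms; a routine stationary-phase / direct estimate (there is no stationary point since the phase $-\tau\log x$ is monotone) shows this is $O(\log|\tau|)$. For the remaining contribution of $\mathcal E$, integrate by parts once more to move the derivative onto $x^{-1/2}$: the boundary term at $x=|\tau|^2$ is $\ll |\tau|^{A+\epsilon}\cdot|\tau|/|\tau| = |\tau|^{A+\epsilon}$ (using (i) with $x=|\tau|^2$), the boundary term at $x=2$ is $O(1)$, and the integral $\int_2^{|\tau|^2}\mathcal E(x,\tau)\,x^{-3/2}\d x\ll |\tau|^{A+\epsilon}\int_2^{|\tau|^2}x^{-1}\d x\ll|\tau|^{A+\epsilon}\log|\tau|$. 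Absorbing the logarithm into the $\epsilon$ gives $G(\tfrac12+it)\ll|t|^{A+\epsilon}$.

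For the converse (ii) $\Rightarrow$ (i): Run Perron's formula for $M(x)$ in the form
\[
M(x)=\frac{1}{2\pi i}\int_{\kappa-iY}^{\kappa+iY} G\bigl(s+i\tau\bigr)\frac{x^{s}}{s}\,\d s + (\text{error}),
\]
with $\kappa=1+1/\log x$ and a truncation height $Y$ to be chosen as a power of $x$. Shift the contour to $\Re s=\tfrac12$. The double pole of $G(s+i\tau)$ at $s=1-i\tau$ produces exactly the main term $\int_2^x\frac{u^{-i\tau}}{2\pi}\log\frac{u}{2\pi}\d u$ up to $O(1)$ (here one uses the Riemann–von Mangoldt formula to identify the Laurent coefficients of $G$ at its pole, matching \eqref{belo}). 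On the shifted line $s=\tfrac12+iv$ one has $G(s+i\tau)=G(\tfrac12+i(v+\tau))\ll (1+|v+\tau|)^{A+\epsilon}$ by (ii), so the shifted integral is $\ll x^{1/2}\int_{-Y}^{Y}(1+|v+\tau|)^{A+\epsilon}\frac{\d v}{1+|v|}$; choosing $Y$ a suitable power of $x$ (and checking that for $x\le|\tau|^2$ the mass of the integrand is concentrated near $v\approx-\tau$, where the factor $1/(1+|v|)\asymp 1/|\tau|$), this is $\ll x^{1/2}|\tau|^{A+\epsilon}$ — which is \eqref{error} with exponent $A+\epsilon$, i.e. (i). The Perron truncation error and the horizontal segments are handled by the convexity-type bound $G(\sigma+it)\ll|t|^{1-\sigma}\log|t|+(\log|t|)^2$ of Ivi\'c quoted above, which is more than enough on those pieces.

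The main obstacle — and the place that needs the most care — is the bookkeeping near the pole $s=1-i\tau$ in both directions: one must verify that the polar contribution of $G(s+i\tau)$ really reproduces the integral $\int_2^x u^{-i\tau}\log(u/2\pi)\,\d u/2\pi$ up to $O(1)$ uniformly in $\tau$ (not just up to $O(\log)$ or with a $\tau$-dependent constant), which requires knowing the constant and linear Laurent coefficients of $G$ at $s=1$ explicitly from \eqref{RvM} and controlling the truncation of the Perron integral uniformly when $\tau$ is as large as $\sqrt{x}$, i.e. when the pole sits at height comparable to the truncation height $Y$; symmetrically, in (i) $\Rightarrow$ (ii) one must ensure the smooth main term of $M$ contributes only $O(\log|\tau|)$ to $G(\tfrac12+i\tau)$ and does not secretly hide a term of size $|\tau|^{A}$. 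Once the polar/main-term matching is pinned down, everything else is a routine contour shift plus the elementary estimate $\int \mathcal E\,x^{-3/2}\d x$.
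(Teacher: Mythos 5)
Your proposal follows essentially the same route as the paper: for i)\,$\Rightarrow$\,ii) one truncates the Dirichlet series for $G$ at $X=t^2$ and applies partial summation, feeding in the hypothesis on $\mathcal E$ together with unconditional bounds for the tail; for ii)\,$\Rightarrow$\,i) one applies a truncated Perron formula to $G(s+i\tau)$, shifts the contour to $\Re s=\tfrac12$, picks up the residue at the double pole $s=1-i\tau$ (which matches the smooth main term via \eqref{belo}), and bounds the remaining segments by the assumed growth of $G$ on the critical line. The one place your sketch would need correcting is the Perron truncation height: you propose $Y$ as ``a power of $x$'', but since the pole sits at height $|\tau|$ and the hypothesis runs over $2\le x\le|\tau|^2$, such a $Y$ can be far below $|\tau|$ when $x$ is small, so the contour would miss the pole entirely; the paper fixes $T=\tau^2$ (a power of $\tau$, not $x$), which also makes the Perron truncation error $\ll x^{1+\epsilon}/\tau^2+(\log x)\log\tau$ uniformly acceptable --- and note the worry you flag about ``$\tau$ as large as $\sqrt x$'' has the inequality the wrong way round, since $\tau\ge\sqrt x$ throughout the range in question.
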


\paragraph{\bf Structure of the paper} In the next two sections, we prove Theorems 1 and 2 correspondingly. Sections 4--6 are devoted to the proof of Theorem 3. 
In the last section, we give several historical remarks.
\section{Proof of Theorem \texorpdfstring{\ref{3}}{1}: Stieltjes Integration}

We begin with the integral in (\ref{LHforZEROS}) and show that it is equal to the explicit main term in the theorem.
In fact, writing $n(u):=\frac{u}{2\pi}\log\frac{u}{2\pi e}$ for the main term in the Riemann-von Mangoldt formula (\ref{RvM}), it follows that 
\begin{align}\label{belo}
	\begin{split}
	{\mathcal I}&:=\int_2^x\frac{u^{-i\tau}}{2\pi}\log\frac{u}{2\pi}\d u =\int_2^xn'(u)\,u^{-i\tau}\d u\\
	&= n(u)\,u^{-i\tau}\Big\vert_{u=2}^x+i\tau\int_2^x n(u)\,u^{-i\tau-1}\d u \\
	&= \frac{x^{1-i\tau}}{2\pi}\log\frac{x}{2\pi e}+O(1)+i\tau\left({\mathcal I}-\int_2^x\frac{u^{-i\tau}}{2\pi}\d u\right).
	\end{split}
\end{align} 
Computing the latter integral and solving the equation (\ref{belo}) for ${\mathcal I}$ yields that
\begin{equation}\label{ee}
	{\mathcal I}=\frac{x^{1-i\tau}}{2\pi(1-i\tau)}\left(\log\frac{x}{2\pi}-\frac{1}{1-i\tau}\right)+O(1).
\end{equation}
On the other hand,  Stieltjes integration implies that
\[
\sum_{0<\gamma<x}\gamma^{-i\tau}=\int_2^x u^{-i\tau}\d N(u)+O(1).
\]
From the Riemann-von Mangoldt formula (\ref{RvM}) it follows that
$$
\sum_{0<\gamma<x}\gamma^{-i\tau}=\mathcal{I}+\int_2^x u^{-i\tau}\d \big(S(u)+f(u)\big)+O(1).
$$
The contribution of $f$ is negligible here. In view of $f'(u)\ll u^{-2}$ (see \cite{tit2}, proof of Theorem 14.14 (A)) we have
$$
\int_2^xu^{-i\tau}\d f(u)\ll 1.
$$
Hence, we get from \eqref{ee} that
$$
{\mathcal E}=\sum_{0<\gamma<x}\gamma^{-i\tau}-\frac{x^{1-i\tau}}{2\pi(1-i\tau)}\left(\log\frac{x}{2\pi}-\frac{1}{1-i\tau}\right)=\int_2^x u^{-i\tau}\d S(u)+O(1).
$$
Integrating by parts shows for the integral on the right that
\begin{equation}\label{sozi}
	\int_2^x u^{-i\tau}\d\,S(u)=S(u)u^{-i\tau}\Big\vert_{u=2}^x+i\tau\int_2^x S(u)u^{-1-i\tau}\d u.
\end{equation}
Applying the bound (\ref{litt}) in (\ref{sozi}), leads to 
$$
{\mathcal E}\ll|\tau|(\log x)^2.
$$

\section{Proof of Theorem \texorpdfstring{\ref{unconditional1}}{2}: Contour Integration} 
For the proof of the theorem we will need to employ the first and second derivative test and the stationary phase method. 
The following lemma which can be found in \cite[\S, Lemma 2]{kavo} collects all of these techniques.
Its last statement is implicitly given therein.
Here and in the sequel $\mathrm{e}(x):=e^{2\pi i x}$, $x\in\mathbb{R}$.
\begin{Lem}\label{stationary phase}
	Let $1\leq x<y\ll x$ and $f\in C^4[x,y]$ and $\phi\in C^2[x,y]$ be real-valued functions such that for some $U\geq y-x$, $0<A\leq U$ and $H>0$
	\begin{align*}
		\begin{array}{lll}
			\abs*{f''(t)}\asymp {A}^{-1},& f^3(t)\ll \parentheses{UA}^{-1},& f^4(t)\ll \parentheses*{U^2A}^{-1}
			\\		\phi(t)\ll H,& \phi'(t)\ll {H}{U}^{-1},& \phi''(t)\ll {H}{U}^{-2}
		\end{array}
	\end{align*}
	uniformly on $t\in[x,y]$.
	If $f'(t_0)=0$ for some $t_0\in[x,y]$ and $f''(t)>0$, then
	\begin{align*}
		\int_{x}^{y}\phi(t)\mathrm{e}(f(t))\mathrm{d}t
		&=\frac{e^{\pi i/4}\phi(t_0)\mathrm{e}(f(t_0))}{\parentheses*{f''(t_0)}^{1/2}}+O\parentheses*{H{E}},
	\end{align*}
	where 
	\[
	{E}:={A}{U}^{-1}+\min\parentheses*{{A}^{1/2},\abs*{f'(x)}^{-1}}+\min\parentheses*{{A}^{1/2},\abs*{f'(y)}^{-1}}.
	\]
	If no such $t_0\in[x,y]$ exists, then the integral is simply bounded by $H{E}$.
\end{Lem}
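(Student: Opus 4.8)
The plan is to separate the contribution of the amplitude $\phi$ from that of the oscillating factor $\mathrm{e}(f)$, and then to reduce $\int_x^y\mathrm{e}(f)\,\mathrm{d}t$ to a genuine Fresnel integral by a change of variable. We may assume $f''>0$ on $[x,y]$ (this is part of the hypothesis in the stationary case; in the other case $\abs{f''}\asymp A^{-1}$ forces a fixed sign, reduced to $f''>0$ by replacing $f$ by $-f$, which conjugates the integral and changes nothing in $H$, $E$, $\abs{f'(x)}$, $\abs{f'(y)}$). Suppose first that a zero $t_0\in[x,y]$ of $f'$ exists, and split $\phi(t)=\phi(t_0)+(\phi(t)-\phi(t_0))$; the first part yields $\phi(t_0)\int_x^y\mathrm{e}(f)\,\mathrm{d}t$, which will carry the main term. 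For the second part I would write $\phi(t)-\phi(t_0)=(t-t_0)\chi(t)$ with $\chi\ll HU^{-1}$, $\chi'\ll HU^{-2}$, and $f'(t)=(t-t_0)\widetilde f(t)$ with $\widetilde f\asymp A^{-1}$, $\widetilde f'\ll(UA)^{-1}$ (both from the integral form of Taylor's theorem and $f'(t_0)=0$), so that $(\phi(t)-\phi(t_0))\mathrm{e}(f(t))=(2\pi i)^{-1}(\chi/\widetilde f)\,\frac{\mathrm{d}}{\mathrm{d}t}\mathrm{e}(f(t))$; a single integration by parts — using $\chi/\widetilde f\ll HAU^{-1}$, $(\chi/\widetilde f)'\ll HAU^{-2}$ and $y-x\le U$ — bounds the remainder by $\ll HAU^{-1}\ll HE$. (When $f'$ has no zero on $[x,y]$ it is strictly monotone there, so $\abs{f'}\ge\min(\abs{f'(x)},\abs{f'(y)})$; integrating $\int_x^y\phi\,\mathrm{e}(f)\,\mathrm{d}t$ by parts, after splitting off if necessary the sub-interval of length $\ll A^{1/2}$ on which $\abs{f'}<A^{-1/2}$ and estimating it trivially, shows the integral is at once $\ll HA^{1/2}$ and $\ll H\min(\abs{f'(x)},\abs{f'(y)})^{-1}$, hence $\ll H\min(A^{1/2},\abs{f'(x)}^{-1})+H\min(A^{1/2},\abs{f'(y)}^{-1})\ll HE$, which covers the last sentence of the lemma.)

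It remains to evaluate $\int_x^y\mathrm{e}(f(t))\,\mathrm{d}t$ in the stationary case. Since $f''>0$, $t_0$ is the global minimum of $f$ on $[x,y]$, so $f(t)-f(t_0)>0$ for $t\ne t_0$; define $v=v(t)$ by $f(t)-f(t_0)=\tfrac12 f''(t_0)(v-t_0)^2$, with $v-t_0$ of the same sign as $t-t_0$. Taylor's theorem gives $v-t_0\asymp t-t_0$, and hence $\abs{v(x)-t_0}\asymp\abs{x-t_0}$, $\abs{v(y)-t_0}\asymp\abs{y-t_0}$; differentiating the defining relation and resolving the apparent singularity at $v=t_0$ by a further Taylor expansion yields, for the inverse map $t=t(v)$, the expansion $t'(v)=1-\tfrac{f'''(t_0)}{3f''(t_0)}(v-t_0)+R(v)$ with $R(v)\ll\abs{v-t_0}^2U^{-2}$ and $R'(v)\ll\abs{v-t_0}U^{-2}$, into which all of the hypotheses on $f''$, $f'''$ and $f^{(4)}$ enter. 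After substitution, $\int_x^y\mathrm{e}(f(t))\,\mathrm{d}t=\mathrm{e}(f(t_0))\int_{v(x)}^{v(y)}\mathrm{e}(\tfrac12 f''(t_0)(v-t_0)^2)\,t'(v)\,\mathrm{d}v$, and I would treat the three summands of $t'(v)$ in turn: the linear summand gives an integral with the \emph{exact} antiderivative $-\tfrac{f'''(t_0)}{6\pi i(f''(t_0))^2}\mathrm{e}(\tfrac12 f''(t_0)(v-t_0)^2)$, hence is $\ll\abs{f'''(t_0)}(f''(t_0))^{-2}\ll AU^{-1}$, and the summand $R(v)$ is also $\ll AU^{-1}$, by one integration by parts on $\{\abs{v-t_0}>A^{1/2}\}$ and the trivial estimate on the complementary sub-interval.

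For the constant summand I would complete the range of integration to $(-\infty,\infty)$, using the Fresnel evaluation $\int_{-\infty}^{\infty}\mathrm{e}(\tfrac12\alpha w^2)\,\mathrm{d}w=e^{\pi i/4}\alpha^{-1/2}$ ($\alpha>0$) to extract the main term $e^{\pi i/4}(f''(t_0))^{-1/2}$; the two tails $\int_{-\infty}^{v(x)}$ and $\int_{v(y)}^{\infty}$ are oscillatory integrals of a pure quadratic with monotone phase derivative, so one integration by parts bounds each by $\ll\min(A^{1/2},(f''(t_0)\abs{v(x)-t_0})^{-1})$, and the analogue at $v(y)$, and these match the $\min$-terms of $E$ because $f''(t_0)\abs{v(x)-t_0}\asymp\abs{f'(x)}$ and $f''(t_0)\abs{v(y)-t_0}\asymp\abs{f'(y)}$. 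Collecting, $\int_x^y\mathrm{e}(f(t))\,\mathrm{d}t=e^{\pi i/4}(f''(t_0))^{-1/2}\mathrm{e}(f(t_0))+O(E)$, and multiplying by $\phi(t_0)\ll H$ and restoring the $O(HAU^{-1})$ remainder from the first step gives the formula with error $O(HE)$. The one genuinely delicate point is the expansion of $t'(v)$ above — equivalently, that $t''$ is Lipschitz with constant $\ll U^{-2}$ — which needs the implicit relation to be differentiated three times with careful handling of the point $v=t_0$, and is exactly where the fourth derivative of $f$ is used; everything after that is integration by parts and accounting. No borderline configuration (a stationary point within $O(A^{1/2})$ of an endpoint, or a very short interval) needs a separate argument, since the main term has modulus $\asymp A^{1/2}$ and so may simply be absorbed into $E$ when it is not genuinely present.
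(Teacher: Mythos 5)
The paper does not prove this lemma; it is quoted from Karatsuba--Voronin \cite{kavo} (Lemma~2 of their chapter on van der Corput's method), with the non-stationary case noted as implicit there. So there is no in-paper proof to compare against. That said, your argument is a correct, self-contained proof along the standard lines: split $\phi(t)=\phi(t_0)+(\phi(t)-\phi(t_0))$, kill the second piece with one integration by parts after writing it as $(\chi/\tilde f)\cdot\frac{d}{dt}\mathrm{e}(f)$, and reduce $\int\mathrm{e}(f)$ to a genuine Fresnel integral by the substitution $f(t)-f(t_0)=\tfrac12 f''(t_0)(v-t_0)^2$ followed by a two-term Taylor expansion of $t'(v)$, treating the constant, linear and remainder pieces separately and completing the Fresnel integral to $\mathbb R$. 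The bookkeeping you carry out — $\chi/\tilde f\ll HAU^{-1}$, $(\chi/\tilde f)'\ll HAU^{-2}$, $|f'''(t_0)|/f''(t_0)^2\ll AU^{-1}$, $R\ll|v-t_0|^2U^{-2}$, $f''(t_0)|v(x)-t_0|\asymp|f'(x)|$ — all checks, and you correctly identify that the one delicate point is the uniform control of $t''$ (which is exactly where the hypothesis on $f^{(4)}$ is consumed). Two small observations for completeness: (i) several of your absorptions, e.g.\ $A^{3/2}U^{-2}\ll AU^{-1}$ on the short interval $|v-t_0|\le A^{1/2}$, quietly use $A^{1/2}\le U$, which follows from $A\le U$ only when $U\ge1$ — not literally in the hypotheses, but harmless, since for $U<1$ the interval $[x,y]$ has length $<1$ and the statement trivialises; (ii) in the non-stationary case the bound you obtain is really $\ll H\min\bigl(A^{1/2},\min(|f'(x)|,|f'(y)|)^{-1}\bigr)$, which is $\le$ the sum $H\min(A^{1/2},|f'(x)|^{-1})+H\min(A^{1/2},|f'(y)|^{-1})$ as required, so the logic is fine but worth stating in that order. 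Overall the argument is sound and essentially the proof one finds in the cited reference.
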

	\begin{proof}[Proof of Theorem \ref{unconditional1}]
	Let us assume first that $x$ and $y$ are not ordinates of zeta-zeros.
	The logarithmic derivative $\zeta'/\zeta$ has simple poles at the zeros of $\zeta$, and is regular elsewhere except for a simple pole at $s=1$. 
	Since $\parentheses*{i\parentheses*{\frac{1}{2}-s}}^{-i\tau}$ is analytic in the upper half-plane, Cauchy's theorem yields that
	\[
	\mathcal{I}(x,y,\tau)=\frac{1}{2\pi i}\int_{\square}\frac{\zeta'}{\zeta}(s)\parentheses*{i\parentheses*{\frac{1}{ 2}-s}}^{-i\tau}\mathrm{d} s,
	\]
	where the integration is over a positively oriented rectangular contour $\square$ with vertices $1/2-\delta+ix,{3}/{2}+ix, {3}/{2}+iy,1/2-\delta+iy$.		
	Here $\eta:=1/\log x$ and $\delta:=1/2+\eta$.
	We decompose the contour integral into four integrals 
	\begin{align}\label{beck}
		\sum_{j\leq 4}I_j:=\frac{1}{2\pi i}\braces*{\int_{-\eta+ix}^{3/2+ix}+\int_{3/2+ix}^{3/2+iy}+\int_{3/2+iy}^{-\eta+iy}+\int_{-\eta+iy}^{-\eta+ix} }\frac{\zeta'}{ \zeta}(s)\parentheses*{i\parentheses*{\frac{1}{2}-s}}^{-i\tau}\mathrm{d} s
	\end{align}
	
	Concerning the exponential factor, we observe that for any $t\geq {\tau}^{1/2}$
	\begin{align}\label{exponentialfactor}
		\begin{split}
			\parentheses*{i\parentheses*{\frac{1}{2}-s}}^{-i\tau}
			&=\exp\parentheses*{\tau\arg\parentheses*{t+i\parentheses*{\frac{1}{2}-\sigma}}}\exp\parentheses*{-i\tau\log\abs*{t+i\parentheses*{\frac{1}{2}-\sigma}}}\\
			&=e^{(1/2-\sigma)\tau/t}\mathrm{e}\parentheses*{-\frac{\tau\log t}{2\pi}}\parentheses*{1+O\parentheses*{\frac{\tau}{t^2}}}.
		\end{split}
	\end{align}
	
	We begin with the horizontal integrals in (\ref{beck}). 
	For this purpose we employ the classical bound \cite[(12.20)]{ivicbook}
	\begin{align}\label{logz}
		\frac{\zeta'}{\zeta}(s)\ll(\log t)^2,\quad t \geq1,\,-1\leq\sigma\leq2.
	\end{align}
	We then have that
	\begin{align}\label{i1i3}
	I_1\ll(\log x)^2\int_{-\eta}^{3/2}e^{(1/2-\sigma)\tau/x}\mathrm{d}\sigma\ll	e^{\tau/(2x)}(\log x)^2.
	\end{align}
	A similar computation and $y\ll x$ shows that $I_3$ can be estimated as $I_1$.
	
	For the vertical integral $I_2$ we employ the absolutely convergent Dirichlet series representation of $\zeta'/\zeta$ in the half-plane $\sigma>1$ and interchange summation and integration to obtain that
	\[
	\overline{I_2}=-\frac{1}{2\pi}\sum_{n\geq2}\frac{\Lambda(n)}{n^{3/2}}\int_{x}^{y}e^{-\tau/t}\mathrm{e}\parentheses*{\frac{\tau\log t+t\log n}{2\pi}}\mathrm{d}t+O\parentheses*{\frac{\tau}{x}\sum_{n\geq2}\frac{\Lambda(n)}{n^{3/2}}}.
	\]
	Since the first factor of the integrand is a monotonically increasing positive function for $t\in[x,y]$ and
	\[
	\frac{\mathrm{d}}{\mathrm{d}t}\brackets*{\tau\log t+t\log n}=\frac{\tau }{t}+\log n\geq\log2,\quad t\in[x,y],
	\]
	is also monotonic for $t\in[x,y]$, the last statement of
	Lemma \ref{stationary phase} implies that
	\begin{equation}\label{I2}
		I_2\ll\sum_{n\geq2}\frac{\Lambda(n)}{n^{3/2}}e^{-\tau/y}+\frac{\tau}{x}\ll1+\frac{\tau}{x}.
	\end{equation}
	
	The case of the vertical integral $I_4$ can become more delicate. 
	Here we recall the functional equation \cite[(1.24)]{ivicbook} of the zeta-function in the form
	\[
	\zeta(s)=\Delta(s)\zeta(1-s),\quad \Delta(s)=2^s\pi^{s-1}\sin\parentheses*{\frac{\pi s}{2}}\Gamma(1-s),\quad s\in\mathbb{C},
	\]
	which implies that
	\[
	\frac{\zeta'}{\zeta}(s)=\frac{\Delta'}{\Delta}(s)-\frac{\zeta'}{\zeta}(1-s),\quad s\in\mathbb{C}.
	\]
	Inserting this in $I_4$, we find that
	\begin{align}\label{david}
		I_4
		=\frac{1}{2\pi i}\int_{-\eta+iy}^{-\eta+ix}\parentheses*{\frac{\Delta'}{\Delta}(s)-\frac{\zeta'}{\zeta}(1-s)}\parentheses*{i\parentheses*{\frac{1}{2}-s}}^{-i\tau}\mathrm{d} s=: J_1+J_2.
	\end{align}
	
	The integral term of $\mathcal{I}(x,y,\tau)$ arises from the integral $J_1$ over the logarithmic derivative of $\Delta$. 
	By definition $\Delta'/\Delta$ is a meromorphic function whose (simple) poles lie on the real line while Stirling's formula (see \cite[(1.25)]{ivicbook}) implies that
	\begin{equation*}
		\frac{\Delta'}{\Delta}(s)=-\log\frac{\abs*{t}}{2\pi}+O\parentheses*{\frac{1}{ |t|}}\ll\log|t|,\quad|t|\geq2,\,|\sigma|\leq 1.
	\end{equation*}
	By Cauchy's theorem we see that
	\[
	-J_1=\frac{1}{2\pi i}\braces*{\int_{-\eta+ix}^{1/2+ix}+\int_{1/2+ix}^{1/2+iy}+\int_{1/2+iy}^{-\eta+iy}}\frac{\Delta'}{\Delta}(s)\parentheses*{i\parentheses*{\frac{1}{2}-s}}^{-i\tau}\mathrm{d}s.
	\]
	The horizontal integrals can be estimated as $I_1$ and $I_3$, where this time we employ the bound of $\Delta'/\Delta$ instead of the one \eqref{logz} for $\zeta'/\zeta$, and they can be seen to be $O(e^{\tau/(2x)}\log x)$.
	As for the vertical integral we have in view of \eqref{exponentialfactor} and the asymptotic expansion of $\Delta'/\Delta$ that
	\begin{align}\label{J_1}
		J_1+O(e^{\tau/(2x)}\log x)=\int_x^{y}\frac{t^{-i\tau}}{2\pi}\log\frac{t}{2\pi}\mathrm{d}t+O\parentheses*{\int_x^{y}\frac{\mathrm{d}t}{t}}\ll\frac{x\log x}{\tau}.
	\end{align}

To compute the integral $J_2$ we employ, similar as with $I_2$, the absolutely convergent Dirichlet series representation of $\zeta'/\zeta$ to see that
	\begin{align}\label{J_2}
		\begin{split}
		J_2
		&=\frac{1}{2\pi}\sum_{n\geq2}\frac{\Lambda(n)}{n^{1+\eta}}\int_x^{y}e^{\delta\tau/t}\mathrm{e}\parentheses*{\frac{t\log n-\tau\log t}{2\pi}}\mathrm{d}t+O\parentheses*{\frac{e^{\tau/(2x)}\tau}{x}\sum_{n\geq2}\frac{\Lambda(n)}{n^{1+\eta}}}\\
		&=:\frac{1}{2\pi}\sum_{n\geq2}\frac{\Lambda(n)\mathcal{L}(n)}{n^{1+\eta}}+O\parentheses*{\frac{e^{\tau/(2x)}\tau\log x}{x}}.
		\end{split}
	\end{align}
	
	For the integrals $\mathcal{L}(n)$ we will consider two separate cases $\tau\leq cx\log 2$ and $c x\log 2\leq\tau\ll x\log x$.
	In the first case $\tau\leq cx\log 2$, we have that
		\[
	\frac{\mathrm{d}}{\mathrm{d}t}\brackets*{\tau\log t-t\log n}=\frac{\tau }{t}-\log n\leq(c-1)\log2,\quad t\in[x,y],
	\]
	and, thus, Lemma \ref{stationary phase} implies that
	\begin{align*}
	J_2\ll\sum_{n\geq2}\frac{\Lambda(n)}{n^{1+\eta}}+\log x\ll\log x.
	\end{align*}
	The first statement of the theorem follows now from \eqref{beck}, \eqref{i1i3}--\eqref{J_1} and the above estimate for $J_2$, when $x$ and $y$ are not ordinates of zeta-zeros.
	
	In the second case $c x\log 2\leq\tau\ll x\log x$, we employ Lemma \ref{stationary phase} with
	\[
	2\pi f(t)=t\log n-\tau\log t,\quad\phi(t)=e^{\delta\tau/t},
	\]
	\[
	U\asymp x,\quad A=\frac{x^2}{\tau},\quad H=\frac{e^{\tau/(2x)}\tau}{x},\quad t_0=\frac{\tau}{\log n}.
	\]
	With the above notations the requirements of the lemma are satisfied where $2\pi f'(t)=\log n-\tau/t$, $2\pi f''(t)=\tau/t^2$ and $t_0\in[x,y]$ if, and only if, $e^{\tau/y}\leq n\leq e^{\tau/x}$.
	Therefore,
	\begin{align}\label{J_2secondcase}
		\begin{split}
		\frac{1}{2\pi}\sum_{n\geq2}\frac{\Lambda(n)\mathcal{L}(n)}{n^{1+\eta}}
		&=	\parentheses*{\frac{\tau}{2\pi}}^{1/2}\sum_{e^{\tau/y}\leq n\leq e^{\tau/x}}\frac{e^{i\pi /4}\Lambda(n)}{{n}^{1/2}\log n}\parentheses*{\frac{e\log n}{\tau}}^{i\tau}\\
		&\quad+O\parentheses*{\frac{e^{\tau/(2x)}\tau}{x}\sum_{\log n\in\cup_{i\leq 7}K_i}\frac{E_n\log n}{n}},
		\end{split}
	\end{align}
	where $E_n$ is the error term in Lemma \ref{stationary phase} and varies with $\log n$ ranging over the intervals, $K_1,\dots,K_7$ say,
	\begin{align*}
		\begin{array}{ccc}
		\brackets*{\frac{\tau}{y}-1,\frac{\tau}{y}-\frac{{\tau}^{1/2}}{y}},& \brackets*{\frac{\tau}{y}-\frac{\tau^{1/2}}{y},\frac{\tau}{y}+\frac{\tau^{1/2}}{y}},& \brackets*{\frac{\tau}{y}+\frac{\tau^{1/2}}{y},\frac{\tau}{y}+1},\\
		&\brackets*{\frac{\tau}{y}+1,\frac{\tau}{x}-1},&\\
		\brackets*{\frac{\tau}{x}-1,\frac{\tau}{x}-\frac{\tau^{1/2}}{x}},&	\brackets*{\frac{\tau}{x}-\frac{\tau^{1/2}}{x},\frac{\tau}{x}+\frac{\tau^{1/2}}{x}},&	\brackets*{\frac{\tau}{x}+\frac{\tau^{1/2}}{x},\frac{\tau}{x}+1},
		\end{array}
	\end{align*}
	respectively.
	
	Starting with $K_1$ we see that
	\begin{align*}
		\sum_{\log n\in K_1}\frac{\log n}{n}E_n&\ll\frac{x}{\tau}\sum_{\log n\in K_1}\frac{\log n}{n}+ \sum_{\log n\in K_1}\frac{\log n}{n\parentheses*{\frac{\tau}{y}-\log n}}\\
		&\ll 1+\frac{\tau}{y}\int_{e^{\tau/y-1}}^{e^{\tau/y-\tau^{1/2}/y}}\frac{\mathrm{d}u}{u\parentheses*{\frac{\tau}{y}-\log u}}+{e^{-\tau/y}}\tau^{1/2}\\
		&\ll\frac{\tau\log x}{x}+{e^{-\tau/y}}\tau^{1/2}.
	\end{align*}
	Completely analogously
	\begin{align*}
		\sum_{\log n\in K_3\cup K_5\cup K_7}\frac{\log n}{n}E_n\ll\frac{\tau\log x}{x}+{e^{-\tau/y}}\tau^{1/2}.
	\end{align*}
	Moving on the summation over the interval $K_2$ we have that
	\begin{align*}
		\sum_{\log n\in K_2}\frac{\log n}{n}E_n
		\ll\frac{x}{\tau}\sum_{\log n\in K_2}\frac{\log n}{n}+ e^{-\tau/y}\tau^{1/2}\sum_{\log n\in K_2}1\ll\frac{\tau}{x}+e^{-\tau/y}\tau^{1/2}
	\end{align*}
	and, similarly,
	\[
	\sum_{\log n\in K_6}\frac{\Lambda(n)}{n}E\ll\frac{\tau}{x}+e^{-\tau/y}\tau^{1/2}.
	\]
	Lastly,
	\[
	\sum_{\log n\in K_4}\frac{\log n}{n}E_n\ll\parentheses*{1+\frac{x}{\tau}}\sum_{\log n\in K_4}\frac{\log n}{n}\ll\frac{\tau}{x}.
	\]
	The second part of the theorem follows now from \eqref{beck}, \eqref{i1i3}--\eqref{J_2secondcase} and the above estimates, when $x$ and $y$ are not ordinates of zeta-zeros.
	
	Otherwise, the initial contour integration ought to be considered over a rectangle with vertices $-\frac{1}{2}+i(x-\epsilon),\frac{3}{2}+i(x-\epsilon), \frac{3}{2}+i(y-\epsilon),-\frac{1}{2}+i(y-\epsilon)$, where $\epsilon>0$ is sufficiently small such that $[x-\epsilon,x)\cup[y-\epsilon,y)$ does not contain any such ordinates.
	Then the proof remains unaltered.	
\end{proof}
\section{Proof of Theorem \texorpdfstring{\ref{order}: $i)\Rightarrow ii)$}{3: i)-->(ii)}}
Before we give the proof of the one direction Theorem \ref{order} (in the following section) we need to discuss the analytic properties of the function $G$.
In comparison with the Dirichlet series defining $\zeta$ (resp. its first derivative) one may expect that $G$ has a double pole at $s=1$. Indeed, it follows from Stieltjes integration that
\begin{eqnarray}\label{analyt}
	G(s)=\frac{1}{2\pi(s-1)^2}-\frac{\log 2\pi}{2\pi(s-1)}+C_1+s\int_1^\infty \big(S(u)+f(u)\big)u^{-s-1}\d u,
\end{eqnarray}   
valid for $\sigma>0$ (by applying (\ref{litt}) and $f(u)\ll u^{-1}$), where $C_1$ is an absolute constant. This analytic continuation was first shown by Chakravarty \cite{chakra}. 
Applying partial integration, Ivi\'c \cite{ivic} obtained  
\begin{align}\label{analytplus}
	G(s)&=\frac{1}{2\pi(s-1)^2}-\frac{\log 2\pi}{2\pi(s-1)}+C_1+\\
	&\quad+s\int_1^\infty f(u)u^{-s-1}\d u+s(s+1)\int_1^\infty \int_1^uS(v)\d v\, u^{-s-2}\d u,\nonumber
\end{align}   
which yields an analytic continuation to $\sigma>-1$ (by (\ref{litt})). Bondarenko et al. \cite{bonda} have shown that  $G(s)$ admits a meromorphic continuation to $\mathbb{C}$ with further simple poles at $s=1-2n$, $n\in\N$, provided that the Riemann Hypothesis is true.
This result was already found by Delsarte \cite{delsarte} (by different means). 

In our case, we have (initially for $\sigma > 1$, then by analytic continuation for $\sigma > 0$) that
\[
G(s)=\sum_{0<\gamma<X}\gamma^{-s}+R_X(s),
\]
where $X\geq1$ and
\begin{align*}
	R_X(s)&:=\frac{X^{1-s}}{2\pi(s-1)}\log\frac{X}{2\pi}+\frac{X^{1-s}}{2\pi(s-1)^2}-X^{-s}(S(X)+f(X))+\\
	&\quad +s\int_X^\infty u^{-s-1}(S(u)+f(u))\d u
\end{align*}
(see for example \cite[ (4.17)]{bonda})).
If we choose $s=1/2+it$ and $X=t^2$, $|t|\geq2$, then the bound $S(u)+f(u)\ll \log u$, $u\geq2$, implies that
\[
R_X(1/2+it)\ll\log|t|.
\]
Moreover, Stieltjes integration by parts and our assumptions yield that
\begin{align*}
	\sum_{0<\gamma<t^2}\frac{1}{\gamma^{1/2+it}}
	&=\frac{\sum_{0<\gamma<t^2}\gamma^{-it}}{|t|}+\frac{1}{2}\int_1^{t^2}\frac{\sum_{0<\gamma<u}\gamma^{-it}}{u^{3/2}}\d u\\
	&\ll|t|^{A+\epsilon}+\int_{1}^{t^2}\left(\frac{\log u}{|t|u^{1/2}}+\frac{|t|^{A+\epsilon}}{u}\right)\d u\\
	&\ll|t|^{A+\epsilon}.
\end{align*}
Therefore, 
\[
G(1/2+it)\ll|t|^{A+\epsilon},\quad|t|\geq1,
\]
and this concludes the one direction of the theorem.
\section{Perron's Formula for \texorpdfstring{$G$}{G}} 

Our aim is to find an asymptotic formula for the left hand side (\ref{LHforZEROS}), where $\tau$ is real. For this purpose we recall the well-known formula (see \cite[Lemma 2.1]{ivicbook})
\[
\frac{1}{2\pi i}\int_{c-iT}^{c+iT}\frac{y^s}{s}\d s=\delta(y)+O\left(y^c\min\{1, (T\vert \log y\vert)^{-1}\}\right)
\]
with  $\delta(y)=1$ for $y>1$, $\delta(1)=\frac{1}{2}$ and $\delta(y)=0$ for $0<y<1$.

Applying this to the $G$ defining Dirichlet series in the half-plane of absolute convergence, we find, for $c>1$, 
\begin{equation}\label{perron}
	\sum_{0<\gamma<x}\gamma^{-i\tau}=\frac{1}{2\pi i}\int_{c-iT}^{c+iT}G(s+i\tau)\frac{x^s}{s}\d s+{\rm{E}},
\end{equation}
where
\begin{equation}\label{perronerror}
	{\rm{E}}\ll x^c\sum_{\gamma>0}\gamma^{-c}\min\{1, (T\vert \log (x/\gamma)\vert)^{-1}\},
\end{equation}
and $x, T$ are large and $T>2x$; here we also suppose (to avoid technicalities) that $x\neq \gamma$. In order to estimate this sum we split the range of summation into $\vert \gamma-x\vert\leq \frac{x}{T}$, $\frac{x}{T}<\vert \gamma-x\vert\leq \frac{x}{2}$ and $|\gamma-x|>\frac{x}{2}$.

For the range $\vert \gamma-x\vert\leq \frac{x}{T}$, the Riemann-von Mangoldt formula (\ref{RvM}) yields that
\begin{align}\label{axl1}
	\begin{split}
		x^c\sum_{\substack{\gamma>0\\\vert \gamma-x\vert\leq x/T}}\gamma^{-c}\min\{1, (T\vert \log (x/\gamma)\vert)^{-1}\}
		\ll\sum_{\substack{\gamma>0\\\vert \gamma-x\vert\leq x/T}}1\ll\log x.
	\end{split}
\end{align}

For the second range $\frac{x}{T}<\vert \gamma-x\vert\le \frac{x}{2}$, we will employ the relation
$$
|\log(\gamma/x)|=\left|\log\left(1+\frac{\gamma-x}{x}\right)\right|\gg \frac{|\gamma-x|}{x}.
$$
To that end
\begin{align*}
	x^c&\sum_{\substack{\gamma>0\\x/T<\vert  \gamma-x\vert\le x/2}}\gamma^{-c}\min\{1, (T\vert \log (x/\gamma)\vert)^{-1}\}\\
	&\ll \frac{x}{T}\sum_{x/2\leq \gamma< x-x/T}\frac{1}{x-\gamma}+\frac{x}{T}\sum_{x+x/T<  \gamma\le 3x/2}\frac{1}{\gamma-x}\\
	&\ll\sum_{j\leq T/2}\frac{1}{j}\sum_{x-(j+1)x/T\leq\gamma\leq x-jx/T}1+\sum_{j\leq T/2}\frac{1}{j+1}\sum_{x+jx/T\leq\gamma\leq x+(j+1)x/T}1.
\end{align*}

Once more, an application of the Riemann-von Mangoldt formula yields that
\[
x^c\sum_{\substack{\gamma>0\\x/T<\vert  \gamma-x\vert\le x/2}}\gamma^{-c}\min\{1, (T\vert \log (x/\gamma)\vert)^{-1}\}\ll(\log x)\log T.
\]
Lastly, for the range $\vert \gamma-x\vert>\frac{x}{2}$, we have $\vert\log(\gamma/x)\vert^{-1}\ll 1$ and, thus, for all sufficiently large $T$,
\begin{eqnarray*}
	x^c\sum_{\substack{\gamma>0\\ \vert \gamma-x\vert> x}}\gamma^{-c}\min\{1, (T\vert \log (x/\gamma)\vert)^{-1}\}\ll \frac{x^c}{T}\sum_{\gamma>0}\gamma^{-c}\ll \frac{x^c}{T}G(c).
\end{eqnarray*}
In view of (\ref{analyt}) we may replace here $G(c)$ by $(c-1)^{-2}$.
Collecting all estimates we deduce that
\begin{align}\label{perron2}
	\begin{split}
		\sum_{0<\gamma<x}\gamma^{-i\tau}
		&=\frac{1}{2\pi i}\int_{c-iT}^{c+iT}G(s+i\tau)\frac{x^s}{s}\d s+O\left(\frac{x^c}{(c-1)^2T}+(\log x)\log T\right).
	\end{split}
\end{align}
We may now also allow $x=\gamma$ here at the expense of an additional error $x^{-i\tau}\ll 1$ which is negligible.

The idea of a Perron formula for general Dirichlet series can already be found in \cite{perro}; for our purpose, however, a truncated version (with finite $T$ is useful different to the infinite one ($T=\infty$) in the original paper or in all  other relevant papers the authors are aware of.

\section{Proof of Theorem \texorpdfstring{\ref{order}: $ii)\Rightarrow i)$}{3: ii)-->i)}}   

We replace the integral in (\ref{perron2}), $I_G$ say, by contour integration. Consider the rectangular contour with vertices $c\pm iT$ and $1/2\pm iT$, where $c:=1+\epsilon$ and $T=\tau^2$.
Then, by Cauchy's theorem,
\begin{align}\label{cont}
	I_G=\left\{\int_{c-iT}^{1/2-iT}+\int_{1/2-iT}^{1/2+iT}+\int_{1/2+iT}^{c+iT}\right\}G(s+i\tau)\frac{x^s}{s}\d s +2\pi i{\rm{res}},
\end{align}
where ${\rm{res}}$ denotes the only residue of the integrand inside the contour of integration corresponding to the double pole of $G(s+i\tau)$ at $s=1-i\tau$ (by (\ref{analyt})). In fact, taking into the Laurent expansion (\ref{analyt}) in combination with 
\[
\frac{x^s}{s}=\frac{x^\omega}{\omega}+\frac{x^\omega}{\omega}\left(\log x-\frac{1}{\omega}\right)(s-\omega)+O(\vert s-\omega\vert^2),
\]
specialized for $\omega=1-i\tau$, it follows that
\begin{equation}\label{residue}
	{\rm{res}}=\frac{x^{1-i\tau}}{2\pi(1-i\tau)}\left(\log \frac{x}{2\pi}-\frac{1}{1-i\tau}\right).
\end{equation}
Another integration by parts shows that the quantity on the right is up to a bounded term $O(1)$ equal to the integral in (\ref{LHforZEROS}) (as follows from (\ref{belo})). Hence, in view of (\ref{perron2}) and (\ref{cont}), we have found the main term of the desired asymptotic formula. It remains to bound the integrals over the line segments in (\ref{cont}).

For the integrals over the horizontal line segments, we find, by our assumption on the growth of $G(1/2+it)$ and the Phragm\'en-Lindel\"of principle, that
\begin{align*}
	\int_{1/2\pm iT}^{c\pm iT}G(s+i\tau)\frac{x ^s}{s}\d s\ll \int_{1/2}^{1+\epsilon}T^{A-1+\epsilon} x^\sigma\d \sigma\ll x^{1+\epsilon}\tau^{2A-2+\epsilon}\ll|\tau|^{A+\epsilon}{x}^{1/2}
\end{align*}
since $x\leq|\tau|^2$ and $A\leq\frac{1}{2}$.
For the vertical integral on the left we find that
\begin{align*}\label{ev}
	\int_{1/2-iT}^{1/2+iT}G(s+i\tau)\frac{x ^s}{s}\d s\ll \int_{-T}^{T}\frac{\sqrt{x}|\tau|^{A+\epsilon}}{\left|\frac{1}{2}+it\right|}\d t\ll|\tau|^{A+\epsilon}{x}^{1/2}.
\end{align*}
Substituting these bounds in (\ref{cont}) and taking into account \eqref{residue} and \eqref{perron2} we finally arrive at 
\begin{align*}
	{\mathcal E}&=\left\vert \sum_{0<\gamma<x}\gamma^{-i\tau}-\frac{x^{1-i\tau}}{2\pi(1-i\tau)}\left(\log \frac{x}{2\pi}-\frac{1}{1-i\tau}\right) \right\vert\\
	&\ll (\log x)\log T+|\tau|^{A+\epsilon}{x}^{1/2}\\
	&\ll |\tau|^{A+\epsilon}{x}^{1/2}.
\end{align*} 

\section{Concluding Historical Remarks} 

The classical Lindel\"of Hypothesis (\ref{lh}) dates back to a paper by Lindel\"of and Phragm\'en \cite{liph} (including their Phragm\'en-Lindel\"of principle) and a sequel due to Lindel\"of \cite{linde} in which he asks: ``D'ailleurs, le module $\vert\zeta(s)\vert$ ne resterait-il pas inf\'erieur \`a une limite finie pour $\sigma\geq 1/2+\epsilon, \vert t\vert>t_0(>0)$, quelque petit qu'on se donne le nombre positif $\epsilon$?'' In 1910, Bohr\footnote{The paper is authored by Bohr and Landau, however, in the introduction  it is explained that Bohr was the one proving this unboundedness.} \cite{boh} gave a negative answer to this question by showing that $\zeta(s)$ is unbounded to the right of the critical line, also distant to the simple pole. The naming  {\it Lindel\"of Hypothesis} for (\ref{lh}) is probably due to the influential works of Hardy \& Littlewood (in particular \cite{hl}) about a decade later.  

The recent paper \cite{gonek} of Gonek et al.  has put the classical Lindel\"of Hypothesis in an interesting new and rather general context. 
The first and third author of this article were discussing a Lindel\"of hypothesis for Lerch and Hurwitz zeta-functions in \cite{garunk} (also in a form close to that proposed by Gonek et al.), and Laurin\v cikas \& Macaitien\.e \cite{lama} continued this line of research. 
Conrey \& Ghosh \cite{cg} discussed the generalization of the Lindel\"of Hypothesis for $L$-functions (in the frame of the Selberg class); here is the case of fixed $\tau$ of greater interest and there are numerous papers on related subconvexity problems.
Recently, the first author and Putrius \cite{garPut} extended the results of \cite{gonek} in the setting of $L$-functions from the Selberg class.

The first to consider zeta-functions built from zeros was Mellin \cite{mellin}; this topic had several renaissances, first by Delsarte \cite{delsarte} (as mentioned above), Chakravarty \cite{chakra0,chakra}, and Levinson \cite{levinson} around 1970; then later by Jorgenson \& Lang \cite{jl} as well as Voros \cite{voros}. 
\bigskip

\small

	\bigskip

	\noindent
	Ram\={u}nas Garunk\v{s}tis\\
	Institute of Mathematics, Faculty of Mathematics and Informatics, Vilnius University, \\
    Naugarduko 24, LT-03225 Vilnius, Lithuania\\
    ramunas.garunkstis@mif.vu.lt
\bigskip

\noindent
Athanasios Sourmelidis\\
Univ. Lille, CNRS\\%
UMR 8524 - Laboratoire Paul Painlevé\\%
F-59000~Lille, France\\
athanasios.sourmelidis@univ-lille.fr
\bigskip

\noindent
J\"orn Steuding\\
Department of Mathematics, W\"urzburg University\\
Emil Fischer-Str. 40, 97\,074 W\"urzburg, Germany\\
steuding@mathematik.uni-wuerzburg.de
	
\end{document}